\documentclass{amsart}
\usepackage[utf8]{inputenc}
\usepackage{amsmath}
\usepackage{amsthm}
\usepackage{amsfonts}
\usepackage{amssymb, amsbsy}
\usepackage{hyperref}
\usepackage{enumitem}
\usepackage{comment}
\usepackage{mathtools}
\usepackage{cancel}
\usepackage{xcolor}
\usepackage{stmaryrd}
\usepackage{bbm}

\DeclareMathAlphabet{\dutchcal}{U}{dutchcal}{m}{n}
\SetMathAlphabet{\dutchcal}{bold}{U}{dutchcal}{b}{n}
\DeclareMathAlphabet{\dutchbcal} {U}{dutchcal}{b}{n}

\title{Characterization of polyconvex isotropic functions}
\author{David Wiedemann}
\address{Institute of Mathematics, University of Augsburg, Universit\"atsstr.~12a, 86159 Augsburg, Germany.
	\newline
	Current address: Department of mathematics, Technical University of Dortmund, Vogelspothsweg 87, 44227
	Dortmund, Germany}
\email{david.wiedemann@tu-dortmund.de}

\author{Malte A. Peter}
\address{Institute of Mathematics \& Centre for Advanced Analytics and Predictive Sciences (CAAPS), University of Augsburg, Universit\"atsstr.~12a, 86159 Augsburg, Germany}
\email{malte.peter@math.uni-augsburg.de}

\subjclass[2020]{49J10, 74B20, 74G65}
\keywords{Polyconvexity, Isotropy, Finite isotropic elasticity}

\newcommand{\R}{\mathbb{R}}
\newcommand{\N}{\mathbb{N}}

\newcommand{\M}{\mathcal{M}}
\newcommand{\m}{\dutchcal{m}}
\newcommand{\diag}{\operatorname{diag}}
\newcommand{\adj}{\operatorname{adj}}
\newcommand{\SvPc}{\operatorname{SvPc}}

\newcommand{\SO}{\mathrm{SO}}
\renewcommand{\O}{\mathrm{O}}
\renewcommand{\P}{\operatorname{P}}
\newcommand{\Pd}{\Pi(d)}
\renewcommand{\S}{\mathrm{S}}

\newtheorem{theorem}{Theorem}[section]
\newtheorem{corollary}[theorem]{Corollary}
\newtheorem{lemma}[theorem]{Lemma}
\newtheorem{proposition}[theorem]{Proposition}
\newtheorem{definition}[theorem]{Definition}
\newtheorem{remark}[theorem]{Remark}
\newcommand{\1}{\mathbbm{1}}

\begin{document}
	\sloppy

	\begin{abstract}
		
		Polyconvexity is an important concept in the analysis of energies related to elasticity. A function $f \colon \R^{d\times d} \to \R$ is called polyconvex if it can be written as a convex function in the minors of the argument.
		We show that for isotropic functions it suffices to consider diagonal matrices. For $d=3$, this leads to a dimension reduction for the convex representative of $f$ from $\R^{19}$ to $\R^7$.
		
		Moreover, we present a new result for the polyconvexity of functions formulated in the principal invariant of the left or right stretch tensor.
	\end{abstract}
	
	\maketitle
	
	\section{Introduction}\label{sec:Intro}
	
	Many applications in elasticity aim to find global minimizers of functionals of the form
	\begin{equation*}
		I \colon W^{1,p}(\Omega) \to \R_\infty \coloneqq \R \cup \{ \infty \}; \quad I(u) \coloneqq \int\limits_\Omega W(Du(x)) \, \textrm{d}x
	\end{equation*}
	for $\Omega \subset \R^d$ with $d \in \{2,3\}$.
 The weak lower semicontinuity of $I$ plays an important role in the direct method of calculus of variations. Under certain growth conditions from
	above and from below, the quasiconvexity of $W$ is equivalent to the weak lower semicontinuity of $I$ \cite{Mor52, Mor66}, see also \cite{Dac82} for a detailed overview. Without the growth condition from above, there are only a few results on the lower semicontinuity for quasiconvex integrands available \cite{Kri15}. 
	However, such a condition is very restrictive in finite strain theory, where often energies are used with $W(F)= \infty$ if $\det(F) \leq 0$ and $W(F) \to \infty$ for $\det(F) \to 0$.
	
	A sufficient condition for quasiconvexity is presented in \cite{Mor52}, which is equivalent to polyconvexity. One of the main advantages of polyconvexity is that no growth condition from above is necessary to deduce the weak lower semicontinuity of $I$, cf.~\cite{Bal77}, \cite{Bal77b}, also see \cite{Bal02} and \cite{Dac08} for an overview. Thus, polyconvexity is well suited for many problems in elasticity.
	\begin{definition}[Polyconvexity for functions on $\R^{d \times d}$]For $d\in \N$ a function $W \colon \R^{d \times d} \to \R_\infty$ is said to be polyconvex if there exists a convex function $G \colon \R^{K_d} \to \R_\infty$ such that
		\begin{equation}\label{eq:def:Polyconvexity}
			W(F) = G(\M(F))  \qquad \text{for all }F \in \R^{d \times d}\,,
		\end{equation}
		where $\M(F) \in \R^{K_d}$ with $K_d \coloneqq \sum_{i=1}^d \binom{d}{i}^2$ denotes the vector of all minors of $F \in \R^{d \times d}$.
		If $G$ can additionally be chosen to be lower semicontinuous, we call $W$ lower semicontinuous polyconvex. 
	\end{definition}
	The vector of minors can be identified for $d = 2$ by $\M(F) = \left(F,\, \det(F)\right) \in \R^5$ and for $d=3$ by $\M(F) = \left(F, \,\adj(F)^\top,\, \det(F)\right) \in \R^{19}$.
	Note that lower semicontinuous polyconvexity is the natural extension of the notion of polyconvexity for functions that can attain the value $\infty$, cf.~Remark \ref{rem:lsc} below for more details.
	
	Many highly relevant energy densities in finite strain theory are isotropic, such as the neo-Hookean, Saint Venant--Kirchhoff, Odgen hyperelastic and Mooney--Rivlin models.
	\begin{definition}[Isotropic functions]\label{def:Isotrop}
For $d\in \N$, we call a function $W \colon \R^{d \times d} \to \R_\infty$ isotropic if
		\begin{equation}\label{eq:Iso}
			W(F) = W(R_1 F R_2) \qquad \textrm{ for all } F \in \R^{d \times d}, \, R_1, R_2 \in \SO(d)\,,
		\end{equation}
		where $\SO(d) \coloneqq \{Q \in \O(d) \mid \det(Q) = 1\}$ denotes the special orthogonal group which is a subset of the orthogonal group $\O(d) \coloneqq \{Q \in \R^{d \times d} \mid Q^\top Q = \1\}$.
	\end{definition}
	
		\subsection{Main result}
	We aim to combine the notion of isotropy with polyconvexity, leading to a dimension reduction.
	As described in more detail below, one can decompose every matrix $F\in \R^{d\times d}$ into a product of the form
	\begin{equation}
		F= R_1 \diag(\nu) R_2
	\end{equation} for $R_1, R_2\in \SO(d)$ and a diagonal matrix $\diag(\nu)$ with entries $\nu \in \R^d$. Together with \eqref{eq:Iso}, we deduce that every isotropic function $W$ is completely defined by its values on the set of diagonal matrices. This leads to the identification of isotropic functions with $\Pi(d)$-invariant functions $\Phi \colon \R^d \to \R_\infty$ via \begin{equation}\label{eq:Phi=Wdiag}
		\Phi(\nu) = W(\diag(\nu)) \qquad \nu \in \R^d \,,
	\end{equation} cf.~Definition \ref{def:Pid-invariant} and 
	Lemma~\ref{lem:Ident-Iso-Pi_d}. The notion of polyconvexity can be transferred to functions on $\R^d$:
	\begin{definition}[Polyconvexity for functions on $\R^{d}$]\label{def:PolyVector}
		A function $\Phi \colon \R^{d} \to \R_\infty$ is said to be polyconvex if there exists a convex function $g \colon \R^{k_d} \to \R_\infty$ such that
		\begin{equation}\label{eq:def:Polyconvexity-Vector}
			\Phi(\nu) = g(\m(\nu))  \qquad \text{for all } \nu \in \R^{d}\,,
		\end{equation}
		where $\m(\nu) \in \R^{k_d}$ and $k_d \coloneqq 2^d-1$ denotes the vector of all minors of $\nu \in \R^d$, i.e.~
		\begin{align}\label{eq:def:m23}
			&\m(\nu) = (\nu_1, \,\nu_2, \,\nu_1 \nu_2)\in \R^3 &&\textrm{ for } d = 2 \,,
			\\
			&\m(\nu) = (\nu_1, \, \nu_2, \, \nu_3, \, \nu_2 \nu_3, \,\nu_1 \nu_3,\,\nu_1 \nu_2, \,\nu_1 \nu_2 \nu_3)\in \R^{7} &&\textrm{ for } d = 3\,\,.
		\end{align}
		For arbitrary dimension, $\m$ is given in \eqref{eq:def:minorsofVector}.
		If $g$ can additionally be chosen lower semicontinuous, we call $\Phi$ lower semicontinuous polyconvex. 
	\end{definition}
	
	Our main result shows that the dimension reduction for isotropic functions can be transferred to polyconvexity:
	\begin{theorem}[Dimension reduction for polyconvexity of isotropic functions]\label{thm:thm-for-diagonal}
		Let $d \in \{2,3\}$ and $W \colon \R^{d \times d} \to \R_\infty$ be isotropic and $\Phi \colon \R^{d } \to \R_\infty$ be given by \eqref{eq:Phi=Wdiag}. Then, the following statements are equivalent:
		
		(i) $W$ is lower semicontinuous polyconvex,
		
		(ii) there exists a lower semicontinuous convex function ${g \colon \R^{k_d} \to \R_\infty}$  such that
		\begin{equation}\label{eq:Polyconvexity-m}
			W(\diag(\nu)) = g(\m(\nu)) \qquad \text{for all }\nu \in \R^{d}\,,
		\end{equation}
		
		(iii) $\Phi$ is lower semicontinuous polyconvex (in the sense of Definition \ref{def:PolyVector}).
	\end{theorem}
	\smallskip 
	
	The minors of vectors $\m$ are related to the minors of quadratic matrices $\M$ by means of diagonal matrices: The entries of $\m(\nu)$ are those entries of $\M(\diag(\nu))$, which are in general  non-trivial, cf.~
	\begin{align}
		\begin{aligned}\label{eq:M(diag)}
			\M(\diag(\nu)) &= (\diag(\nu), \, \nu_1 \nu_2 ) &&\textrm{ for } d=2\,,
			\\
			\M(\diag(\nu)) &= (\diag(\nu), \, \diag(\nu_2\nu_3,\, \nu_1 \nu_3, \,\nu_1 \nu_2),\, \nu_1 \nu_2 \nu_3) &&\textrm{ for } d = 3\,.
		\end{aligned}
	\end{align}
	Theorem \ref{thm:thm-for-diagonal} yields a dimension reduction for the convex representative $G \colon \R^{K_d} \to \R_\infty$ of  \eqref{eq:def:Polyconvexity} to a convex representative $g \colon \R^{k_d} \to \R_\infty$ in \eqref{eq:def:Polyconvexity-Vector}, where $K_2 = 5$, $K_3= 19$ and $k_2 =3$, $k_3=7$.
	
	In Theorem \ref{thm:Phi=g}, we formulate Theorem \ref{thm:thm-for-diagonal} in terms of the signed singular values, which is closer to the spirit of the dimension reduction result of Ball \cite[Theorem 5.2]{Bal77}. 
	The equivalence of our main result encompasses two implications: One direction, namely the existence of $g$, can be easily inferred from the polyconvexity of $W$ through a restriction of the convex representative $G$. The proof of the other direction is more involved and the main focus of this paper.
	
	Theorem \ref{thm:thm-for-diagonal} implies some symmetry condition on $\Phi$ and $g$, namely $\Pi(d)$-invariance and poly-$\Pi(d)$-invariance, respectively, which is discussed below in Section \ref{sec:PolyInvariance}. 
	Finally, we note that or finite-valued functions, we can deduce with Remark \ref{rem:lsc} that Theorem \ref{thm:thm-for-diagonal} remains true without the lower semicontinuity of the convex representatives:
	\begin{corollary}
		[Dimension reduction for polyconvexity of finite isotropic functions]\label{cor:thm-for-finitevalue}
		Let $d \in \{2,3\}$ and $W \colon \R^{d \times d} \to \R$ be isotropic and $\Phi \colon \R^{d } \to \R$ be given by \eqref{eq:Phi=Wdiag}. Then, the following statements are equivalent:
		
		(i) $W$ is polyconvex,
		
		(ii) there exists a convex function ${g \colon \R^{k_d} \to \R}$  such that \eqref{eq:Polyconvexity-m} is satisfied,
		
		(iii) $\Phi$ is polyconvex (in the sense of Definition \ref{def:PolyVector}).
	\end{corollary}
	A proof of Corollary \ref{cor:thm-for-finitevalue} is given at the end of Section \ref{sec:Duality}.
	\begin{remark}[On the lower semicontinuity of the convex representative]\label{rem:lsc}
		The notion of lower semicontinuous polyconvexity is a natural extension of polyconvexity for functions that attain the value $\infty$.
		Proofs for the weakly lower semicontinuity of $I$ (\cite{Bal77, Bal77b}, see also \cite{BCO81} and  \cite{Cia88, Dac08, Rin18}) employ not only the polyconvexity of $W$ but also the lower semicontinuity of the convex representative.
		Note that the convex representative cannot always be chosen with the same regularity as the function itself \cite{Bev03, Bev11}.
		
		Every lower semicontinuous polyconvex function is also polyconvex. The reverse direction holds for functions that attain only finite values, i.e.~$W \colon \R^{d \times d} \to \R$.
		In this case, there exists a convex representative $G \colon \R^d \to \R$ that also attains only finite values, cf.~\cite[Theorem 5.6]{Dac08}.
		Since every convex function with finite values is continuous (cf.~\cite[Theorem 2.31]{Dac08}), the convex representative of $W$ is continuous  and thus $W$ is polyconvex.
		The same argumentation can be transferred to the function $\Phi$.
	\end{remark}
	
	\subsection{Motivation and overview on the literature}
	The dimension reduction for polyconvexity in terms of the (singed) singular values plays a crucial role in the concept of polyconvexity. 
	John Ball derived a sufficient, but not necessary, criterion for the polyconvexity of isotropic functions in terms of the singular values (cf. \cite[Theorem 5.2]{Bal77}). He used this criterion to show the polyconvexity of some Odgen-type energy densities in \cite{Bal77}. 
	His criterion has proven to be particularly useful for the construction of polyconvex functions and is employed in the verification of other polyconvexity criteria for functions formulated in terms of principal stretches and invariants, e.g.~\cite{CDHL88, Ste03}.
	Compared to the criterion in \cite[Theorem 5.2]{Bal77}, our criterion in Theorem \ref{thm:thm-for-diagonal} is not only sufficient but also necessary, thereby expanding the range of applications to the following points:
	(i) The possibility to apply the criterion for verifying the polyconvexity of any isotropic polyconvex function.
	(ii) The verification of non-polyconvexity for isotropic functions.
	(iii) The construction of polyconvex envelopes and optimal fitting of polyconvex functions in the case of isotropy.
	
	\smallskip
	
	An isotropic function is convex if and only if its restriction to the set of diagonal matrices is convex (see Proposition \ref{prop:ConvexityISO}).
	Historically, this result was first shown for $\O(d) \times \O(d)$-times invariant functions (functions that satisfy \eqref{eq:Iso} also for $R_1, R_2 \in \O(d)$) and the result was formulated in terms of the singular values instead of diagonal matrices in \cite{TF71, Hil70}. In \cite{Bal77, LeD90}, this result was shown  for $\O(d) \times \O(d)$-times invariant functions based on the von Neumann inequality \cite{VN37, Mir75}. For isotropic, i.e.~$\SO(d) \times \SO(d)$-times invariant functions, the result was established for $d=2$ in \cite{DK93}. For $d\geq 3$, it was shown in \cite{Vin97} based on a convexity theorem of \cite{Kos73}. A different proof based on a generalized von Neumann inequality and signed singular values was given in \cite{Ros97, DM07}.

	Ball used his convexity result for $\O(d) \times \O(d)$ invariant functions to derive his well-established criterion for the polyconvexity of isotropic functions \cite[Theorem 5.2]{Bal77}. Indeed, the additional invariance simplifies the derivation; however, it is also one of the reasons why it does not yield a necessary criterion.
	For $d=3$, he applied the convexity criterion of $\O(d) \times \O(d)$-invariant functions for the first nine arguments of $G$, which belong to $F$, and the second nine arguments, which belong to $\adj(F)$, separately. 
	This introduces additional restrictive symmetries, which are not even necessary for the polyconvexity of $\O(d) \times \O(d)$-invariant functions.
	
	In \cite{BDG94}, the polyconvex envelope of $\O(2)\times \O(2)$-invariant functions was characterized by means of the singular values. This characterization provides an implicit criterion for a function to be polyconvex. 
	An only sufficient but less restrictive criterion than that of \cite{Bal77} was presented in \cite{RS94} for the polyconvexity of isotropic functions restricted to the set of matrices with positive determinant and $d =2$. Later, for $d=2$, sufficient and necessary criteria for the polyconvexity of isotropic functions were shown in \cite{Ros97, Sil99}, requiring only the non-decreasing property along several lines. Moreover, for $d=3$ the polyconvexity criterion of \cite{Bal77} was refined into a sufficient and less restrictive but still not necessary criterion in \cite{Sil99}. 
	For $d \in \{2,3\}$, necessary and sufficient conditions for the polyconvexity of isotropic functions defined on $\{F \in \R^{d \times d} \mid \det(F) \geq 0\}$ by means of the singular values were derived in \cite{Mie05}. They are not formulated in the form of the criterion of \cite{Bal77} but in a more implicit way. 
	Afterwards, in \cite{DM06} for $d=2$, the notion of polyconvexity was reduced for isotropic function to the set of diagonal matrices or the signed singular values. This result corresponds to our main result for $d=2$.
	Compared to the previous necessary and sufficient results, it has the advantage of not requiring any complicated-to-access non-decreasing properties along certain lines; instead, it relies solely on the convexity of the function $g$.
	Hence, in the isotropic setting, the polyconvexity criterion of \cite{DM06} for $d=2$ and our Theorem \ref{thm:thm-for-diagonal} for $d\in \{2,3\}$ can be used directly for a dimension reduction of already existing methods in the field of polyconvexity, for instance for determining whether a function is polyconvex, for computing the polyconvex envelope or for fitting a polyconvex function. Such methods can be found for example in \cite{Dac08, DH96, DH98, Dac87, Bar05}.
	At this point, it is essential that the polyconvex envelope of $\O(d)\times \O(d)$-invariant or isotropic functions remains $\O(d)\times \O(d)$-invariant or isotropic, respectively, which was shown in \cite{BDG94, Dac08}. 
	
	\smallskip
	
	For the related notion of rank-1 convexity, we refer to \cite{Sil99, Sil99b, Sil03, Dac01, Aub95, Dav91, GK22} for investigations on isotropic functions.
	However, the notion of rank-1 convexity or quasiconvexity cannot be restricted for isotropic functions to diagonal matrices, which was shown in \cite{DK93, Mue99b}.
	Furthermore, investigation on the polyconvexity and rank-1 convexity for isotropic and, additionally, isochoric functions can be found in \cite{Mie05, MGIN17, GIMN18} and a characterization of symmetric polyconvexity was given in \cite{BKS19}. The non-locality of polyconvexity is investigated in \cite{Kri00}.
	Using diagonal matrices, it was showed that the Saint Venant--Kirchhoff model is not polyconvex \cite{Ra86}.

	\smallskip
		
	In this work, we order for $d=3$ the minors of a matrix $F \in \R^{d \times d}$ by $\M(F) = \left(F, \,\adj(F)^\top, \,\det(F)\right)$, while it is more common in the literature to arrange them by
	$\left(F,\, \adj(F),\, \det(F)\right)$. Of course, this order does not affect polyconvexity; we use our notation solely because it is more convenient in one estimate.
	
	\smallskip
	
	\textbf{Organization of the paper.}
	This paper is mainly devoted to the presentation and proof of Theorem \ref{thm:thm-for-diagonal}. Statement (iii) is only a reformulation of (ii). The implication (i) $\Rightarrow$ (ii) is trivial, where $g$ can be obtained by restricting the convex representative $G$ to the image of $\m$.
	To show the crucial implication ``(iii) $\Rightarrow$ (i)'', we use the polyconvex conjugation, which is an extension of the Fenchel--Legendre conjugation to the setting of polyconvexity. It allows us to trace the implication back to the same investigation but for elementary functions, cf.~Lemma \ref{lem:B=D}. For this, we use a polynomial von Neumann type inequality based on an result of \cite{Mie05}.
	
	What is more, isotropic energy densities in elasticity are often formulated in terms of the principal matrix invariants of the right or left stretch tensor.
	In \cite{Ste03}, the polyconvexity criterion for isotropic functions of \cite{Bal77} was used to derive a sufficient polyconvexity criterion for functions formulated in terms of these invariants.
	We transfer this argumentation to our stronger polyconvexity criterion and, thus, obtain a new sufficient criterion for the polyconvexity of a function, which is formulated in terms of the elementary symmetric polynomials of the signed singular values of $F$ or the principal matrix invariants of the right or left stretch tensor. 
	
	The paper is organized as follows. In Section \ref{sec:BasicDef}, we recap some general results on signed singular values, the identification of isotropic and $\Pi(d)$-invariant functions and the dimension reduction for isotropic convex functions. Moreover, we discuss some invariance implications for the function $g$ of Theorem \ref{thm:thm-for-diagonal}.
	In Section \ref{sec:ConvecConjugation} , we present some elementary results on the Fenchel--Legendre conjugation, its generalization to polyconvexity and the identification of both.
	In Section~\ref{sec:Duality}, we use the polyconvex conjugation in the space of singular values to show the main results.
	In Section~\ref{sec:Invariants}, we use this characterization of polyconvexity in order to make some remarks on polyconvexity with respect to matrix invariants.
			
	\section{Singed singular values and isotropy}\label{sec:BasicDef}
	\subsection{Singed singular value decomposition}\label{ssec:SSV}
	For arbitrary $d \in \N$, every matrix $F \in \R^{d \times d}$ can be decomposed with the singular value decomposition into the product of a diagonal matrix with non-negative entries and two orthogonal matrices, i.e.~there exists $\tilde{R}_1, \tilde{R}_2 \in \O(d)$ and $\tilde{\nu} \in [0, \infty)$ such that
			\begin{equation}\label{eq:SV-decomposition}
				F = \tilde{R}_1 \diag(\tilde{\nu}) \tilde{R}_2 \,,
			\end{equation}
			where $\diag(\tilde{\nu}) \in \R^{d \times d}$ denotes the diagonal matrix with entries $\tilde{\nu}_1, \dots, \tilde{\nu}_d \in [0, \infty)$. 
			We call a vector $\tilde{\nu} \in [0, \infty)^d$ resulting from the decomposition \eqref{eq:SV-decomposition} singular values of $F$.
			
			The singular value decomposition is not directly compatible with the concept of isotropy since \eqref{eq:Iso} holds only for $R_1, R_2 \in \SO(d)$ but not for arbitrary elements in $\O(d)$. This leads to the singed singular value decomposition, a refinement of the singular value decomposition. Essentially one allows that the singular values $\tilde{\nu}$ can be negative so that one can choose $R_1$ and $R_2$ with determinant $1$:
			
			For arbitrary $d \in \N$, every matrix $F \in \R^{d \times d}$ can be decomposed in the product of a diagonal matrix and two orthogonal matrices with determinant one, i.e.~there exists $R_1, R_2\in \SO(d)$ and $\nu \in \R^d$ such that
			\begin{equation}\label{eq:SSV-decomposition}
				F = R_1 \diag(\nu) R_2 \,.
			\end{equation}
			We call a vector $\nu \in \R^d$ resulting from the decomposition \eqref{eq:SSV-decomposition} signed singular values of $F$.
			A singed singular value decomposition can be constructed with the singular value decomposition as follows: Let $\tilde{R}_1, \tilde{R}_2 \in \O(d)$ and $\tilde{\nu} \in [0, \infty)^d$ be given by the singular value decomposition of $F$, i.e.~they satisfy \eqref{eq:SV-decomposition}.
			We choose $R_1 = \tilde{R}_1 \diag(\det(R_1),1,\dots, 1) \in \SO(d)$, $R_2 = \diag(\det(R_2),1,\dots, 1) \tilde{R}_2 \in \SO(d)$ and $\nu = (\det(R_1)\det(R_2) \tilde{\nu}_1, \, \dots,\tilde{\nu}_d) \in \R^d$ and deduce with \eqref{eq:SV-decomposition} that the signed singular value decomposition \eqref{eq:SSV-decomposition} holds.
			The signed singular value decomposition \eqref{eq:SSV-decomposition} is not unique. One can change the order of the entries of $\nu$ arbitrarily and can also change the signs for an even number of the entries $\nu$ (by adjusting $R_1$ and $R_2$ accordingly).
			
			To be more precise: let $\S(d)$ be the symmetric group (which we identify with the corresponding permutation matrices, wherever it is convenient) and
			\begin{equation*}
				\Pi(d) \coloneqq \left\{S \diag(\epsilon) \mid S \in \S(d),\, \epsilon \in \{-1,1\}^{d},\, \prod_{i=1}^d \epsilon_i = 1\right\}\,.
			\end{equation*}
			The signed singular values are unique up to a multiplication by an element in $\Pi(d)$, i.e.~$\nu$ is a signed singular value of $F\in \R^{d \times d}$ if and only if $S\nu$ is a signed singular value of $F$ for all $S \in \Pi(d)$.
			
			In the context of elasticity theory, the signed singular values correspond, up to the sign, to the singular values of the deformation gradient $F$, which are equal to the eigenvalues of the symmetric and positive  semidefinite right and left stretch tensors $U$, $V$, which are defined via the right and left Cauchy--Green tensors $C$, $B$ given by $C = U^2= F^\top F$ and $B = V^2 = F F^\top$.  We call a matrix $F \in \R^{d \times d}$ symmetric if $F = F^\top$ and positive semidefinite if $x^\top F x\geq 0$ for all $x \in \R^d$. 
			Due to their physical meaning, isotropic energy densities are often more naturally expressed in terms of the singular values or signed singular values and thus in terms of $\nu$ rather than in terms of $F$. 
						
			Theorem \ref{thm:thm-for-diagonal} can be reformulated using the concept of signed singular values:
			\begin{theorem}[Polyconvexity in terms of the signed singular values]\label{thm:Phi=g}
				Let $d \in \{2,3\}$. An isotropic function $W \colon \R^{d \times d} \to \R_\infty$ is lower semicontinuous polyconvex if and only if
				there exists a lower semicontinuous convex function ${g \colon \R^{k_d} \to \R_\infty}$ with $k_d \coloneqq 2^d-1$ such that 
				\begin{align}
					\begin{aligned}\label{eq:W(F)=g}
						W(F) &= g(\m(\nu)) = g(\nu_1, \, \nu_2, \,\nu_1 \nu_2) &&\textrm{ for } d = 2,
						\\
						W(F) &= g(\m(\nu)) = g(\nu_1, \, \nu_2, \, \nu_3, \, \nu_2 \nu_3, \,\nu_1 \nu_3,\,\nu_1 \nu_2, \,\nu_1 \nu_2 \nu_3) &&\textrm{ for } d = 3
					\end{aligned}
				\end{align}
				for all $F \in \R^{d \times d}$ and for all choices of signed singular values $\nu \in \R^d$ of $F$.
			\end{theorem}

			\subsection{Isotropic and \texorpdfstring{$\O(d)\times \O(d)$}{O(d)xO(d)}-times invariant functions}\label{ssec:IsoFunctions}
			A function \linebreak \text{$W\colon \R^{d\times d} \to \R_\infty$} is called $\O(d)\times \O(d)$-times invariant if 
			\begin{equation}\label{eq:OdxOd-Inv}
				W(F) = W(R_1 F R_2) \qquad \textrm{ for all } F \in \R^{d \times d}, \, R_1, R_2 \in \O(d)\,.
			\end{equation}
			A function $W$ is said to be $\SO(d)\times \SO(d)$-times invariant if \eqref{eq:Iso} is satisfied, i.e.~\eqref{eq:OdxOd-Inv} is satisfied for $R_1,R_2 \in \SO(d)$. Thus,  $\O(d)\times \O(d)$-times invariance is a stronger condition than $\SO(d)\times \SO(d)$-times invariance.
			\begin{remark}[Alternative definition of isotropy]
			In Definition \ref{def:Isotrop}, we have defined the term isotropic as 
$\SO(d)\times \SO(d)$-times invariance. Sometimes in the literature (e.g.~\cite{Bal77}), the condition isotropic is defined in the following equivalent manner:
A function $W \colon\R^{d\times d} \to \R_\infty$ is said to be objective if 
\begin{equation*}
	W(F) = W(RF) \qquad\textrm{for all } F \in \R^{d \times d}, \, R \in \SO(d)\,.
\end{equation*}
A function $W\colon  \R^{d\times d} \to \R_\infty$ is called isotropic if it is objective and 
\begin{equation*}
	W(F) = W(RFR^\top) \qquad\textrm{for all } F \in \R^{d \times d}, \, R \in \O(d)\,.
\end{equation*}
\end{remark}

			Using the singed singular value decomposition \eqref{eq:SSV-decomposition} and the $\SO(d)\times \SO(d)$ invariance, we observe that 
			\begin{equation}
				W(F) = W(R_1 \diag(\nu) R_2) = W(\diag(\nu))\,.
			\end{equation}
			Thus, every isotropic function is already completely prescribed by its values on the set of diagonal matrices.
			We define for a given function $W \colon \R^{d \times d} \to \R_\infty$ the function $\Phi \colon \R^{d} \to \R_\infty$ by $\Phi = W \circ \diag$.
			The isotropy of $W$ implies that the corresponding function $\Phi$ is $\Pi(d)$-invariant: 
			\begin{definition}[$\Pi(d)$-invariance]\label{def:Pid-invariant}
			For $d\in \N$, a function $\Phi \colon \R^d \to \R_\infty$ is called $\Pi(d)$-invariant if
				\begin{align*}
					\Phi(\nu) &= \Phi(S \nu)  && \hspace{-3cm}\text{for all } \nu \in \R^d\,,\, S \in \Pi(3)\,, \text{i.e.~}
					\\
					\Phi(\nu_1, \dots, \nu_i, \dots , \nu_j, \dots, \nu_d) &= \Phi(\nu_1, \dots, \nu_j, \dots , \nu_i, \dots, \nu_d) &&\forall \nu \in \R^d,\,  i < j\,,
					\\
					\Phi(\nu_1, \dots, \nu_i, \dots , \nu_j, \dots, \nu_d) &= \Phi(\nu_1, \dots, -\nu_i, \dots , -\nu_j, \dots, \nu_d) &&\forall \nu \in \R^d,\,  i < j  \,.
				\end{align*}
			\end{definition}
			As immediate consequence of the signed singular value decomposition, we obtain the following lemma (see also \cite[Proposition 5.31]{Dac08}):
			\begin{lemma}[Identification of isotropic and $\Pi(d)$-invariant functions]\label{lem:Ident-Iso-Pi_d}
				For $d\in \N$, the space of isotropic functions can be identified with the space of $\Pi(d)$-invariant functions via the following bijectivity: 
				\begin{align*}
					\{ W\colon \R^{d \times d} \to \R_\infty\mid W \text{ is isotropic }\}
					&\longleftrightarrow
					\{ \Phi \colon \R^{d} \to \R_\infty\mid \Phi \text{ is } \Pi(d)\text{-invariant}\}\,,
					\\
					W \qquad &\longmapsto \quad (\nu \mapsto \Phi(\nu) \coloneqq W(\diag(\nu))\,,
					\\
					(F \mapsto W(F) \coloneqq \Phi(\nu_F))  \quad &\longmapsfrom \qquad  \Phi \,,
				\end{align*}
				where $\nu_F$ denote some choice of singed singular values of $F$.
			\end{lemma}
			
			So, we can write in particular for arbitrary $F \in \R^{d \times d}$ and arbitrary signed singular values $\nu_F$ of $F$:
			\begin{equation}\label{eq:W=Phi}
				W(F) = W(\diag(\nu_F)) = \Phi(\nu_F) \,.
			\end{equation}
			
			The above identification of isotropic functions with $\Pi(d)$-invariant functions can be refined when considering the smaller class of $\O(d) \times \O(d)$ invariant functions. Such functions can be identified with $\S(d)$-invariant functions $\tilde{\Phi} \colon [0, \infty)^d \to \R_\infty$. A function $\tilde{\Phi}\colon [0, \infty) \to \R_\infty$ is called $\S(d)$-invariant if 
			\begin{equation*}
				\tilde{\Phi}(\nu) = \tilde{\Phi}(S \nu) \qquad \text{for all } \nu \in [0, \infty) \,,\, S \in \S(d)\,,
			\end{equation*} 
			where we identified the permutation $S \in \S(d)$ with the permutation matrix, i.e.~
			$S\nu= (\nu_{S(1)}, \dots, \nu_{S(d)})$ for $\nu \in \R^d$.
			Using the singular value decomposition instead of the singed singular value decomposition, we can adjust Lemma \ref{lem:Ident-Iso-Pi_d} as follows: 
			\begin{lemma}[Identification of $\O(d) \times \O(d)$- and $\S(d)$-invariant functions]\label{lem:Ident-Od-Sd}
			For $d\in \N$ the space of $\O(d) \times \O(d)$ functions can be identified with the space of $\S(d)$-invariant functions via the following bijectivity: 
				\begin{align*}
					\left\{\parbox{4.8cm}{
						$W\colon \R^{d \times d} \to \R_\infty\mid$\newline
						\phantom{\quad}$W$ is $\O(d) \times \O(d)$-invariant
					}\right\}
					&\longleftrightarrow
					\{\tilde{\Phi} \colon [0,\infty)^d \to \R_\infty\mid \tilde{\Phi} \text{ is } \S(d)\text{-invariant}\}\,,
					\\
					W \qquad &\longmapsto \quad (\tilde{\nu} \mapsto \tilde{\Phi}(\tilde{\nu}) \coloneqq W(\diag(\tilde{\nu}))\,,
					\\
					(F \mapsto W(F) \coloneqq \tilde{\Phi}(\tilde{\nu}_F) ) \quad &\longmapsfrom \qquad  \tilde{\Phi} \,,
				\end{align*}
				where $\tilde{\nu}_F$ denotes some choice of singular values of $F$.
			\end{lemma}
			
			We note that one can also formulate Lemma \ref{lem:Ident-Od-Sd} with functions $\Phi$ defined on the entire $\R^d$ instead of $\R^d$, when assuming additionally to the $\S(d)$-invariance that $\Phi(\nu)= \Phi(\diag(\epsilon)\nu)$ for all $\nu \in \R^d$ and $\epsilon \in \{-1,1\}^d$.

			In many applications, $W$ is isotropic and attains the value $\infty$ when $\det(F) \leq 0$. In this case, one can also formulate $W$ by means of the singular values using the following separation of cases:
			\begin{equation}\label{eq:IsoSingularValues}
				W(F) = \begin{cases}
					\tilde{\Phi}(\tilde{\nu}_F) & \text{ if } \det(F)>0 \text{ for some singular values } \tilde{\nu}_F \text{ of } F\,,  \\
					\infty &\text{ if } \det(F)\leq 0
				\end{cases}
			\end{equation}
			
			\subsection{Dimension reduction for convex functions}
			An isotropic function is convex if and only if the corresponding $\Pi(d)$-invariant function is convex.
			\begin{proposition}[Convexity of isotropic functions]\label{prop:ConvexityISO}
				For $d \in \N$, let $W \colon \R^{d \times d} \to \R_\infty$ be isotropic and $\Phi\colon \R^d \to \R_\infty$ the corresponding $\Pi(d)$-invariant function given by Lemma~\ref{lem:Ident-Iso-Pi_d}.
				Then, the following statements are equivalent:
				
				(i) $W$ is convex and lower semicontinuous.
				
				(ii) $\Phi$ is convex and lower semicontinuous.
				
				(iii) $W \circ \diag \colon \R^d \to \R_\infty$ is convex and lower semicontinuous.
			\end{proposition}
			The  implication $(i) \Rightarrow (ii)$ is trivial and the equivalence of (ii) and (iii) is an immediate consequence of $\Phi = W\circ \diag$ given by Lemma \ref{lem:Ident-Iso-Pi_d}. For a proof of the non-trivial direction, see for instance \cite{Dac08}.
			
			Since every $\O(d) \times \O(d)$-invariant function is also $\SO(d) \times \SO(d)$-invariant, Proposition~\ref{prop:ConvexityISO} can also be applied for those functions. The corresponding function $\Phi$ additionally satisfies the symmetry $\Phi(\nu)= \Phi(\diag(\epsilon)\nu)$ for all $\nu \in \R^d$ and $\epsilon \in \{-1,1\}^d$, which can be replaced when considering only $\tilde{\Phi} = \Phi|_{[0,\infty)^d}$ and assuming that $\tilde{\Phi}$ is non-decreasing in every argument:
			\begin{lemma}[Convexity of $\O(d) \times \O(d)$-invariant functions]
			For $d\in \N$, let $W \colon \R^{d \times d} \to \R_\infty$ be $\O(d) \times \O(d)$-invariant and $\tilde{\Phi}$ the corresponding $\S(d)$-invariant function given by Lemma \ref{lem:Ident-Od-Sd}.
				Then, the following statements are equivalent:
				
				(i) $W$ is convex and lower semicontinuous.
				
				(ii) $\tilde{\Phi}$ is convex, lower semicontinuous  and non-decreasing in every variable.
			\end{lemma}
			
			\subsection{Polyconvexity and isotropy}\label{sec:PolyInvariance}
			We introduced the notion of polyconvexity for functions on $\R^d$ in Definition \ref{def:PolyVector} using the minors of vectors. For $d\in \{2,3\}$, we defined the minor $\m(\nu)$ of a vector  $\nu \in \R^d$ in \eqref{eq:def:m23}.
			For general dimension $d \in \R$, we define $\m(\nu)$  by the (in general) non-vanishing entries of $\M(\diag(\nu))$:
			\begin{equation}\label{eq:def:minorsofVector}
				\m \colon \R^d \to \R^{k_d}\,,\quad \nu \mapsto \m(\nu) \coloneqq \left(\prod\limits_{j \in K}   \nu_j \right)_{K \in \mathcal{P}(\{1,\dots, d\})}   \quad (\text{with }k_d \coloneqq 2^d -1)\,,
			\end{equation}
			where $\mathcal{P}(\cdot)$ denotes the power set.

			The classical concept of polyconvexity for functions on the set of matrices is not only restricted to quadratic matrices but is also defined on the set $\R^{1 \times d}$ and $\R^{d \times 1}$, where it is equivalent to classical convexity. In particular, it does not coincide with the notion of polyconvexity from Definition \ref{def:PolyVector}.
			
			For $\Pi(d)$-invariant functions, the following notion of \textit{signed singular value polyconvexity} becomes helpful, which we define via the polyconvexity of the corresponding isotropic function $W$:
			\begin{definition}[Singed singular value polyconvex functions]
			For $d\in \R^d$, a $\Pi(d)$-invariant function $\Phi \colon \R^d \to \R_\infty$ is called signed singular value polyconvex if the corresponding function $W \colon \R^{d \times d} \to \R_\infty$, given by Lemma \ref{lem:Ident-Iso-Pi_d}, is polyconvex.
				We call $\Phi$ lower semicontinuous signed singular value polyconvex if $W$ is lower semicontinous polyconvex.
			\end{definition}
			Using this definition, we can reformulate Theorem \ref{thm:thm-for-diagonal} as follows: A $\Pi(d)$-invariant function $\Phi \colon \R^d \to \R_\infty$ is lower semicontinuous signed singular value polyconvex if and only if it is lower semicontinuous polyconvex.
			One direction is trivial and given in the following Lemma while the reverse direction is more involved and presented in the proof of Theorem \ref{thm:thm-for-diagonal} at the end of Section \ref{sec:Duality}.
			
			\begin{lemma}[Singed singular value polyconvexity implies polyconvexity]\label{lem:EquivPolyconvexityPhi}
				For $d\in \N$, let $\Phi \colon \R^d \to \R_\infty$ be $\Pi(d)$-invariant and lower semicontinuous signed singular value polyconvex. Then, $\Phi$ is lower semicontinuous polyconvex.
			\end{lemma}
			\begin{proof}
				Let $W$ be the isotropic representative of $\Phi$ and $G$ be given as the lower semicontinuous convex representative of $W$, i.e.~$\Phi(\nu) = W(\diag(\nu))= G(\M(\diag(\nu)))$ for all $\nu \in \R^d$. We define $g$ as the restriction of $G$ to the in general non-trivial values of $\M(\diag(\nu) = \m(\nu)$, which preserves the convexity and lower semicontinuity, and thus $\Phi(\nu) = g(\m(\nu))$ is lower semicontinuous polyconvex.
			\end{proof}

			The function $G$ in the definition \eqref{eq:def:Polyconvexity} and analogously $g$ in \eqref{eq:def:Polyconvexity-Vector} is not unique. However, every choice of these functions has to be poly-isotropic or poly-$\Pi(d)$-invariant, when $W$ or $\Phi$ is isotropic or $\Pi(d)$-invariant, respectively.
			\begin{definition}[Poly-isotropic and poly-$\Pi(d)$-invariant functions]
				Let $d \in \N$. We call a function $G \colon \R^{d \times d} \to \R_\infty$ poly-isotropic if 
				\begin{equation*}
					G(\M(F)) = G(\M (R_1 F R_2)) \qquad \text{for all } F \in \R^{d\times d} \text{ and all } R_1,R_2 \in \SO(d)\,. 
				\end{equation*}
				We call $g \colon \R^d \to \R_\infty$ poly-$\Pi(d)$-invariant if 
				\begin{equation*}
					g(\m(\nu)) = g(\m (S \nu)) \qquad \text{for all } \nu \in \R^d \text{ and all } S \in \Pi(d)\,. 
				\end{equation*}
			\end{definition}
			
			Vice versa, every poly-isotropic function $G$ induces some isotropic function $W$ and every poly-$\Pi(d)$-invariant function $g$ induces some $\Pi(d)$-invariant function $\Phi$. 
			With Theorem \ref{thm:thm-for-diagonal}, we can add polyconvexity to this statement:
			\begin{corollary}[Construction of isotropic polyconvex functions]\label{cor:ConstructionPolyconvex}
				Let $d\in \{2,3\}$ and $g\colon \R^{d}\to \R_\infty$ be lower semicontinuous, convex and poly-$\Pi(d)$ invariant. Then, $W \colon \R^{d \times d}\to \R_\infty$ is well-defined by $
				W(F) \coloneqq g(\m(\nu_F))$  for $\nu_F$ some signed singular values of $F$ and $W$ is lower semicontinuous polyconvex and isotropic.
			\end{corollary}
			Corollary \ref{cor:ConstructionPolyconvex} is particularly useful for the construction of polyconvex functions. 
			In order to construct a poly-$\Pi(d)$-invariant function for $d \in \{2,3\}$, it can be convenient to impose the following stronger symmetry condition:
			\begin{lemma}[Separate $\Pi(d)$-invariance]
				For $d \in \{2,3\}$, let $g\colon \R^{k_d}\to \R_\infty$ satisfy
				\begin{align}
					\begin{aligned}\label{eq:sym:g:full}
						\textrm{if } d = 2\colon \qquad&g(x,\, z) = g(Sx, \, z) && \text{for all } x\in \R^2,\, z \in  \R, \,S \in \Pi(2)\,,
						\\
						\textrm{if } d = 3\colon\qquad&g(x, \, y, \,z) = g(Sx,\, Sy, \,z)  &&\text{for all } x,y\in \R^3,\, z \in  \R,\,S \in \Pi(3)\,.
					\end{aligned}
				\end{align}
				Then, $g$  is poly-$\Pi(d)$-invariant.
			\end{lemma}
			\begin{proof}
				For $d=2$, we obtain the poly-$\Pi(d)$-invariance directly by choosing $x= \nu$ and $z = \nu_1 \nu_2$ in \eqref{eq:sym:g:full}.
				For $d=3$, we choose $x= \nu$, $y=(\nu_2\nu_3,\, \nu_1\nu_3,\, \nu_1 \nu_2)^\top$, $z = x_1x_2x_3$ in \eqref{eq:sym:g:full} and
				employ that
				\begin{equation}\label{eq:Snu=Snu}
					S\left(\begin{array}{c}
						\nu_2\nu_3\\ \nu_1\nu_3\\ \nu_1 \nu_2
					\end{array}\right) = 
					\left(\begin{array}{c}
						(S\nu)_2 (S\nu)_3\\ (S\nu)_1 (S\nu)_3\\
						(S\nu)_1 (S\nu)_2
					\end{array}\right) \qquad \text{for all }\nu \in \R^3,\,S \in \Pi(3)\,.
				\end{equation} 
			\end{proof}
			
			\section{(Poly)convex conjugation}\label{sec:ConvecConjugation}

			For $d \in \N$ and $f \colon \R^d \to \R_\infty$ with  $f \not\equiv \infty$
			the Fenchel--Legendre conjugation, also called convex conjugation, ${f^* \colon \R^d \to \R_\infty}$ of $f$ is given by
			\begin{equation*}
				f^*(y) = \sup\limits_{x \in \R^d} \langle y,x\rangle - f(x),
			\end{equation*}
			where $\langle \cdot, \cdot\rangle$ denotes the Euclidean scalar product. The dual convex conjugation $f^{**} \colon \R^d \to \overline{\R}\coloneqq \R \cup \{\pm \infty\}$ is defined by $f^{**}(x) \coloneqq \sup\limits_{y \in \R^d} \langle y,x\rangle - f^*(y)$.
						
			The following Lemma gives some well-known properties of the convex conjugation, see for instance \cite{Dac08}:
			\begin{lemma}[Basic results on the convex conjugation]\label{lem:LF-Conj}
				For $d \in \N$ and functions $f, g \colon \R^d \to \R_\infty$ with $f,g\not\equiv \infty$ it holds:
				\begin{itemize}
					\item[(i)] $f^*$ and $f^{**}$ are lower semicontinuous and convex.
					\item[(ii)] $f^{**} \leq 
					f$.
					\item[(iii)] 
					if $f \leq g$, there holds: $g^*\geq f$ and $f^{**} \leq g^{**}$.
					\item[(iv)] if $f$ is lower semicontinuous and convex, the identity $f^{**} = f$ holds.
					
				\end{itemize}
			\end{lemma} 
			
			In particular, Lemma \ref{lem:LF-Conj} implies that $f^{**}$ is the lower semicontinuous convex envelope of $f \colon \R^d \to \R_\infty$.
						
			The notion of polyconvexity lifts the set of matrices $\R^{d \times d}$ to $\R^{K_d}$ via $\M$ and employs the convexity in this higher dimensional space. Based on this, the convex conjugation can be extended to a polyconvex conjugation, which was presented in \cite{Dac87} based on \cite{KS83, KS86}. 
			For a function $W\colon \R^{d \times d} \to \R_\infty$ with $W\not \equiv \infty$, its polyconvex conjugation $W^\wedge \colon \R^{K_d} \to \R_\infty$ is defined by
			\begin{equation*}
				W^\wedge(B) \coloneqq \sup \limits_{F \in \R^{d\times d}} \langle B, \M(F) \rangle - W(F) \,.
			\end{equation*}
			We define the dual polyconvex conjugation  $W^{\wedge\vee} \colon \R^{K_d} \to \R_\infty$  by $	W^{\wedge \vee}(F) \coloneqq \sup_{B \in \R^{K_d}} \langle B, \M(F) \rangle - W^\wedge(B)$.
			This polyconvex conjugation can be transferred to functions defined on $\R^d$:
			\begin{definition}[Polyconvex conjugation]
				For $d \in \N$ and $\Phi \colon \R^{d} \to \R_\infty$ with $\Phi \not \equiv \infty$, we define the polyconvex conjugation $\Phi^\wedge \colon \R^{k_d} \to \R_\infty$ by
				\begin{equation*}
					\Phi^{\wedge}(\beta) \coloneqq \sup\limits_{\nu \in \R^d } \langle\beta, \m(\nu)\rangle - \Phi(\nu)\,.
				\end{equation*}
				We define the dual polyconvex conjugation  $\Phi^{\wedge \vee} \colon \R^{k_d} \to \overline{\R}$ by
				$\Phi^{\wedge \vee}  (\nu) \coloneqq \sup_{\beta \in \R^{k_d} } \langle\beta, \m(\nu) \rangle - \Phi^\wedge(\beta)$.
			\end{definition}
			For the sake of clarity, we note that
			\begin{align*}
				&\langle\beta, \m(\nu) \rangle = \beta_1 \nu_1 + \beta_2 \nu_2 + \beta_3 \nu_1 \nu_2 \!&&\textrm{for } d = 2\,,
				\\ 
				&\langle\beta, \m(\nu) \rangle = \beta_1 \nu_1 + \beta_2 \nu_2 + \beta_3 \nu_3 + \beta_4 \nu_2 \nu_3 + \beta_5 \nu_1 \nu_3 + \beta_6 \nu_1 \nu_2+ \beta_7 \nu_1\nu_2\nu_3 \!&&\textrm{for } d = 3\,.
			\end{align*}

			The polyconvex conjugation can be identified with the convex conjugation as follows:
			\begin{lemma}[Identification of the convex and polyconvex conjugation]\label{lem:Equivalence:Conj}
				For $d\in \N$, let $\Phi \colon \R^d \to \R_\infty$ with $\Phi \not \equiv \infty$ and define $h \colon \R^{k_d} \to \R_\infty$ by
				\begin{equation}\label{eq:def:h}
					h(x) \coloneqq \begin{cases}
						\Phi(\nu) &\textrm{ if } x = \m(\nu)\,,
						\\
						\infty &\textrm{ else}.
					\end{cases}
				\end{equation}
				Then, $\Phi^{\wedge}(\beta) = h^*(\beta)$ for all $\beta \in \R^{k_d}$ and $\Phi^{\wedge\vee}(\nu) = h^{**}(\m(\nu))$ for all $\nu \in \R^d$.
			\end{lemma}
			\begin{proof}
				Since $\langle \beta , x \rangle - h(x) = - \infty$ for $x$ not in the image of $\m$, we obtain
				\begin{equation*}
					\Phi^\wedge(\beta) = \sup\limits_{\nu \in \R^{d}} \langle \beta , \m(\nu) \rangle - \Phi(\nu) = 
					\sup\limits_{x \in \R^{k_d}} \langle \beta , x \rangle - h(x) = h^*(\beta)
				\end{equation*}
				for every $\beta \in \R^{k_d}$.
				Inserting this identity into the dual conjugations yields
				\begin{equation*}
					\Phi^{\wedge\vee}(\nu) = \sup\limits_{\beta \in \R^{k_d}} \langle \beta , \m(\nu) \rangle - \Phi^\wedge(\beta) = \sup\limits_{\beta \in \R^{k_d}} \langle \beta , \m(\nu) \rangle - h^*(\beta)
					= h^{**}(\m(\nu)).
				\end{equation*}
			\end{proof}
			We translate Lemma \ref{lem:LF-Conj} to the polyconvex conjugation:
			\begin{proposition}\label{prop:polyconvexConjugation}
				Let $d \in \N$ and $\Phi,\Psi \colon \R^d \to \R_\infty$ with $\Phi, \Psi \not\equiv\infty$. 
				Then, the following statements hold:
				
				(i) $\Phi^{\wedge \vee}$ is lower semicontinuous polyconvex.
				
				(ii) $\Phi^{\wedge \vee} \leq \Phi$.
				
				(iii) if $\Phi \leq \Psi$, then $\Phi^{\wedge} \geq \Psi^{\wedge}$ and $\Phi^{\wedge \vee} \leq \Psi^{\wedge \vee}$.

				(iv) if $\Phi$ is  lower semicontinuous polyconvex, then $\Phi = \Phi^{\wedge \vee}$.
				
			\end{proposition}
			\begin{proof}

				(i):  With Lemma \ref{lem:Equivalence:Conj}, we can identify the convex conjugation with the polyconvex conjugation, i.e.~$\Phi^{\wedge \vee}(\nu) = h^{**}(\m(\nu))$ for $h$ given by \eqref{eq:def:h}. Since $h^{**}$ is lower semicontinuus convex by Lemma \ref{lem:LF-Conj}, we obtain that $\Phi^{\wedge \vee}(\nu)$ is lower semicontinuous polyconvex.
				
				(ii):
				Rearranging the terms in the definition of $\Phi^{\wedge}$ implies $\Phi(\nu) \geq \langle\beta, \m(\nu)\rangle - \Phi^{\wedge}( \beta)$ for every ${\nu \in \R^d}$ and  $\beta \in \R^{k_d}$. This inequality is preserved for the supremum over $\beta \in \R^{k_d}$ and we obtain $\Phi(\nu) \geq \sup\limits_{\beta \in \R^k_d} \langle\beta, \m(\nu)\rangle - \Phi^{\wedge}( \beta) = \Phi^{\wedge \vee}(\nu)$.

				(iii): This statement is a direct consequence of the definition of the polyconvex conjugation.

				(iv): Part (ii) shows $\Phi \geq \Phi^{\wedge \vee}$, thus it is sufficient to show that $\Phi \leq \Phi^{\wedge \vee}$.
				Let $g$ be a polyconvex lower semicontinuous representative of $\Phi$. We note that 
				\begin{equation*}
					g^*(\beta) = \sup\limits_{x \in \R^{k_d}} \langle \beta , x \rangle - g(x)  \geq \sup\limits_{\nu \in \R^{d}} \langle \beta , \m(\nu) \rangle - g(\m(\nu)) = \Phi^{\wedge}(\beta)\,.
				\end{equation*}
				Applying the convex conjugation on this inequality yields 
				$g^{**} \leq \Phi^{\wedge *}$ (where the convex conjugation has to be formally extended to functions that are identical to $\infty$).
				Since $g$ is lower semicontinuous and convex, we get with Lemma \ref{lem:LF-Conj} that $g= g^{**}$ and can deduce for arbitrary $\nu \in \R^d$:
				\begin{equation*}
					\Phi(\nu)= g(\m(\nu)) = g^{**}(\m(\nu)) \leq \Phi^{\wedge *}(\m(\nu)) = \sup\limits_{\beta \in \R^{k_d}} \langle \beta , \m(\nu) \rangle - \Phi^{\wedge}(\beta) = \Phi^{\wedge \vee}(\nu)\,.
				\end{equation*}
			\end{proof}
			
			\section{Polyconvex conjugation for \texorpdfstring{$\Pi(d)$}{Pid}-invariant functions}\label{sec:Duality}
			
			In this section, we show that for $\Pi(d)$-invariant functions the dual polyconvex conjugation does not only provide a lower semicontinuous polyconvex function as deduced in Proposition \ref{prop:polyconvexConjugation}, but that the resulting function is also lower semicontinuous signed singular value polyconvex, i.e.~the corresponding function $W\colon \R^{d \times d}\to \R_\infty$ is lower semicontinuous polyconvex.
			
			\begin{proposition}\label{prop:PolyConjugationPid}
				Let $d \in \{2,3\}$ and  $\Phi \colon \R^d \to \R_\infty$ be $\Pi(d)$-invariant. Then, $\Phi^{\wedge \vee}$ is lower semicontinuous signed singular value polyconvex.
			\end{proposition}
			
			In the first half of this section, we show that the elementary function $\nu \mapsto \Phi_\beta(\nu) \coloneqq \max\limits_{S \in \Pd} \langle \beta, \m(S\nu) \rangle$ is lower semicontinuous signed singular value polyconvex, i.e.~induces a lower semicontinuous polyconvex function on $\R^{d \times d}$.
			In the second half, we use this to show Proposition \ref{prop:PolyConjugationPid} and, then, to deduce Theorem \ref{thm:thm-for-diagonal}.
			
			\smallskip
			
			\textbf{Elementary singed singular value polyconvex function}
			\begin{proposition}[Elementary singed singular value polyconvex functions]\label{prop:Lambda_lsc-svpc}
				Let $d \in \{2,3\}$. For every $\beta \in \R^{k_d}$, the mapping
				\begin{equation*}
					\Phi_\beta \colon \R^{d} \to \R\,, \quad \nu \mapsto \Phi_\beta(\nu) \coloneqq \max\limits_{S \in \Pd} \langle \beta, \m(S\nu) \rangle\,,
				\end{equation*}
				is $\Pi(d)$-invariant and lower semicontinuous signed singular value polyconvex.
			\end{proposition} 
			
			The proof of Proposition \ref{prop:Lambda_lsc-svpc} is based on Lemma \ref{lem:B=D}, which extends \cite[Proposition 3.5]{Mie05} to signed singular values.
			In order to convince ourselves that this extension is valid and to give deeper insights, we recap intermediate results (see Lemma \ref{lem:Schur} and \ref{lem:B=D}) and adapt the proof of \cite[Proposition 3.5]{Mie05}.
			
			Let $\P \colon \R^{K_d} \to \R^{k_d}$ denote the projection of $\R^{K_d}$ onto the non-trivial entries of $\M \circ \diag$ so that $\P \M(\diag(\nu)) = \m(\nu)$ in particular.
			Let $\odot \colon \R^{d \times d} \times\R^{d \times d} \to \R^{d \times d}$ be the Schur product for matrices defined by the pointwise multiplication, i.e.~$A \odot B = (A_{ij} B_{ij})_{i,j =1, \dots, d}$ for $A,B \in \R^{d \times d}$. We write $A:B = \sum\limits_{i,j =1, \dots, d} A_{ij} B_{ij}$  for the Frobenius product of matrices $A,B \in \R^{d \times d}$. We obtain analogously to \cite[Lemma 3.3]{Mie05} the following result. 
			\begin{lemma}\label{lem:Schur}
				Let $d \in \{2,3\}$, $\beta \in \R^{k_d}$, $\nu \in \R^d$ and $R_1, R_2 \in \SO(d)$. Then, it holds
				\begin{equation}\label{eq:LinearBySchur}
					\langle \beta, \P( \M(R_1\diag(\nu) R_2) )\rangle = (R_1 \odot R_2^\top) \colon N + \beta_{k_d} \prod_{i=1}^d \nu_i
				\end{equation}
				with
				\begin{equation}\label{eq:def:N}
					N = \begin{cases}
						(\beta_1, \beta_2)^\top \otimes \nu &\textrm{ for } d= 2,
						\\
						(\beta_1, \beta_2, \beta_3)^\top \otimes \nu + (\beta_4, \beta_5, \beta_6)^\top \otimes \tilde{\nu} &\textrm{ for } d= 3
					\end{cases}
				\end{equation}
				and $\tilde{\nu} = (\nu_2 \nu_3,\,\nu_1 \nu_3,\, \nu_1 \nu_2)^\top$.
			\end{lemma}
			
			In Lemma \ref{lem:B=D}, we consider the supremum of $A \mapsto A  : N + c \prod_{i=1}^d\nu_i$ over
			\begin{equation*}
				\mathcal{T}_d \coloneqq \{A = R_1 \odot R_2 \mid R_1, R_2 \in \SO(d) \}.
			\end{equation*}
			A linear function over a compact set $T$ always attains its maximum on $\operatorname{ex}(\operatorname{conv}(T))$, the extremal points of the convex hull of $T$. For $C,T \subset \R^{d \times d}$ with $C$ convex, we define
			\begin{align*}
				&\operatorname{conv}(T) \coloneqq \left\{\sum_{j=1}^{d^2+1} \lambda_j A_j\mid \lambda_j \geq 0,\, \sum_{j=1}^{d^2+1} \lambda_j =1,\, A_j \in T \right\},\\
				&\operatorname{ex}(C) \coloneqq \{ A \in C \mid C\setminus \{A\} \textrm{ is convex} \}.
			\end{align*}
			For $d \in \{2,3\}$, the set $\operatorname{ex}(\operatorname{conv}(\mathcal{T}_d))$ was explicitly computed and identified as $\Pd$ in \cite[Proposition 3.4]{Mie05}.
			Together with Lemma \ref{lem:Schur}, we can conclude the following polynomial inequality of von Neumann type analogously to \cite{Mie05}.
			\begin{lemma}[A polynomial inequality of von Neumann type]\label{lem:B=D}
				Let $d \in \{2,3\}$, $\beta\in \R^{k_d}$ and $\nu \in \R^d$. Then,
				\begin{equation*}
					\sup\limits_{R_1, R_2 \in \SO(d)} \langle\beta, \P \left(\M( R_1 \diag(\nu) R_2) \right) \rangle
					=
					\max\limits_{S \in \Pd} \langle\beta, \m(\nu)\rangle. 
				\end{equation*}
			\end{lemma}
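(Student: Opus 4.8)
The idea is to turn the left-hand supremum into an affine optimization problem over the compact set $\mathcal{T}_d$ and then invoke the explicit description $\operatorname{ex}(\operatorname{conv}(\mathcal{T}_d)) = \Pd$ recalled above; concretely, I would show that the left-hand side equals $\max_{S\in\Pd}\langle\beta,\m_d(S\nu)\rangle$. By Lemma~\ref{lem:Schur}, for all $R_1,R_2\in\SO(d)$ one has
\[
\langle\beta,\P(\M(R_1\diag(\nu)R_2))\rangle = (R_1\odot R_2^\top):N+\beta_{k_d}\overset{\bullet}{\nu},
\]
with $N$ as in \eqref{eq:def:N}, so the left-hand side depends on $(R_1,R_2)$ only through $R_1\odot R_2^\top$. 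Since $R_2\mapsto R_2^\top$ maps $\SO(d)$ onto itself, $\{R_1\odot R_2^\top\mid R_1,R_2\in\SO(d)\}=\mathcal{T}_d$, and hence
\[
\sup_{R_1,R_2\in\SO(d)}\langle\beta,\P(\M(R_1\diag(\nu)R_2))\rangle
=\sup_{A\in\mathcal{T}_d}\bigl(A:N+\beta_{k_d}\overset{\bullet}{\nu}\bigr).
\]

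Next, the map $A\mapsto A:N+\beta_{k_d}\overset{\bullet}{\nu}$ is affine, and $\mathcal{T}_d$ is compact, being the continuous image of $\SO(d)\times\SO(d)$ under $(R_1,R_2)\mapsto R_1\odot R_2$. Therefore $\operatorname{conv}(\mathcal{T}_d)$ is a compact convex set with finitely many extreme points, and an affine function attains its supremum over $\mathcal{T}_d$ — equivalently over $\operatorname{conv}(\mathcal{T}_d)$ — at one of these extreme points. Since $\operatorname{ex}(\operatorname{conv}(\mathcal{T}_d))=\Pd$ by \cite[Proposition~3.4]{Mie05}, this yields
\[
\sup_{A\in\mathcal{T}_d}\bigl(A:N+\beta_{k_d}\overset{\bullet}{\nu}\bigr)=\max_{S\in\Pd}\bigl(S:N+\beta_{k_d}\overset{\bullet}{\nu}\bigr).
\]

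It then remains to identify $S:N+\beta_{k_d}\overset{\bullet}{\nu}$ with $\langle\beta,\m_d(S\nu)\rangle$ for each $S\in\Pd$, and this is the step that needs care. Writing $S=P\diag(\epsilon)$ with $P$ a permutation matrix and $\overset{\bullet}{\epsilon}=1$, I would check that $\diag(S\nu)=R_1\diag(\nu)R_2$ for a pair $R_1,R_2\in\SO(d)$ satisfying in addition $R_1\odot R_2^\top=S$: one may take $R_1=SD$ and $R_2=DP^\top$, where $D$ is a diagonal $\{-1,1\}$-matrix with $\det(D)=\det(P)$, using that $D\diag(\nu)D=\diag(\nu)$, that $\det(SD)=\det(DP^\top)=1$ (here the hypothesis $\overset{\bullet}{\epsilon}=1$ enters), and that the entrywise square of a permutation matrix is itself; when $\det(P)=1$ one can simply take $D$ the identity. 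Applying Lemma~\ref{lem:Schur} to this pair and invoking $\m_d(S\nu)=\P(\M(\diag(S\nu)))$ from \eqref{eq:M=PM} gives
\[
\langle\beta,\m_d(S\nu)\rangle=\langle\beta,\P(\M(R_1\diag(\nu)R_2))\rangle=S:N+\beta_{k_d}\overset{\bullet}{\nu}
\]
for every $S\in\Pd$, and combining the displayed identities proves the claim. (Alternatively, this last identification can be carried out by a direct computation from \eqref{eq:def:N}, in which $\overset{\bullet}{\epsilon}=1$ is precisely what makes $(S\nu)_1\cdots(S\nu)_d=\overset{\bullet}{\nu}$ and forces the $\tilde{\nu}$-block of $N$ to transform correctly under $S$.) I expect this sign bookkeeping, rather than the convexity/extreme-point argument, to be the only genuinely delicate point.
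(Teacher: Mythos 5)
Your proposal is correct and follows essentially the same route as the paper: reduce the left-hand side via Lemma~\ref{lem:Schur} to an affine optimization over $\mathcal{T}_d$, invoke $\operatorname{ex}(\operatorname{conv}(\mathcal{T}_d))=\Pd$ from \cite[Proposition~3.4]{Mie05}, and then identify the values at $S\in\Pd$ with $\langle\beta,\m_d(S\nu)\rangle$ (which is also the intended reading of the lemma's right-hand side, where the paper has a typo). The only cosmetic difference is that you carry out this last identification by exhibiting explicit $R_1,R_2\in\SO(d)$ with $R_1\diag(\nu)R_2=\diag(S\nu)$ and $R_1\odot R_2^\top=S$ (which also shows $\Pd\subset\mathcal{T}_d$ directly), whereas the paper computes $S:N$ directly using $S\tilde{\nu}=\widetilde{S\nu}$; both handle the sign bookkeeping via $\overset{\bullet}{\epsilon}=1$ correctly.
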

			\begin{proof}
				By the Krein--Milman theorem, linear functionals attain their extrema over a compact and convex set on extremal points. Thus, we obtain with Lemma \ref{lem:Schur}:
				\begin{align*}
					\sup\limits_{R_1, R_2 \in \SO(d)} \langle \beta, \P \left(\M( R_1 \diag(\nu) R_2) \right) \rangle
					= 
					\sup\limits_{R_1, R_2 \in \SO(d)} R_1 \odot R_2^\top \colon N + \beta_{k_d} \prod_{i=1}^d\nu_i
					\\
					= \sup\limits_{A \in \mathcal{T}_d} A \colon N + \beta_{k_d} \prod_{i=1}^d\nu_i
					=
					\max\limits_{S \in \Pd} S :N + \beta_{k_d} \prod_{i=1}^d\nu_i = \max\limits_{S \in \Pd} \langle\beta, \m(S\nu)\rangle\,,
				\end{align*}
				where $N$ is given by \eqref{eq:def:N}. The last step follows for $d=2$ by 
				\begin{align*}
					S \colon N = S \colon ((\beta_1, \beta_2)^\top \otimes \nu) = \langle (\beta_1, \beta_2)^\top, S \nu\rangle
				\end{align*}
				and for $d =3$ with
				\begin{align*}
					S \colon N = S \colon ((\beta_1, \beta_2, \beta_3)^\top \otimes \nu+ ((\beta_4, \beta_5, \beta_6)^\top \otimes \tilde{\nu}))
					\\
					= \langle(\beta_1, \beta_2, \beta_3)^\top, S\nu\rangle+ \langle(\beta_4, \beta_5, \beta_6)^\top, \widetilde{S\nu} \rangle.
				\end{align*} and  \eqref{eq:Snu=Snu}
				where $\widetilde{S\nu}= \left(\begin{array}{c}
					(S\nu)_2 (S\nu)_3\\ (S\nu)_1 (S\nu)_3\\
					(S\nu)_1 (S\nu)_2
				\end{array}\right)$.
			\end{proof}
			
			\begin{proof}[Proof of Proposition \ref{prop:Lambda_lsc-svpc}]
				We show the result for $d =3$, the case $d=2$ follows similarly.
				We note that for every $\beta \in \R^{k_d}$ and $R_1, R_2 \in \SO(3)$, the mapping
				\begin{equation*}
					(A,B,c) \mapsto \langle \beta, \operatorname{P}(R_1 A R_2,\,R_1 B R_2,\, c) \rangle
				\end{equation*}
				is linear, thus convex and continuous.
				Since convexity and lower semicontinuity are preserved for the supremum, the mapping
				\begin{equation*}
					G_\beta(A,B,c) \coloneqq \sup\limits_{R_1, R_2 \in \SO(3)} \langle\beta, \operatorname{P} (R_1 A R_2,\,R_1 B R_2,\,c) \rangle
				\end{equation*}
				is convex and lower semicontinuous. Thus, $W_\beta$ defined by 
				\begin{equation*}
					W_{\beta}(F) \coloneqq G_{\beta}(F, \,\adj(F)^\top,\, \det(F)) = G_{\beta}(\M(F))
				\end{equation*} is lower semicontinuous polyconvex. We observe that $W_{\beta}$ is isotropic since for every $\tilde{R}_1, \tilde{R}_2 \in \SO(3)$:
				\begin{align*}
					W_\beta(F) &= \sup\limits_{R_1, R_2 \in \SO(3)} \langle\beta, \operatorname{P} (R_1 F R_2,\,R_1 \adj(F)^\top R_2,\, \det(F)) \rangle
					\\
					&=
					\sup\limits_{R_1, R_2 \in \SO(3)} \langle\beta, \operatorname{P} (R_1 \tilde{R_1}F \tilde{R_2}R_2,\, R_1\tilde{R_1} \adj(F)^\top\tilde{R_2}R_2,\, \det(F)) \rangle
					\\
					&=\sup\limits_{R_1, R_2 \in \SO(3)} \langle\beta, \operatorname{P} (R_1 \tilde{R_1}F \tilde{R_2}R_2,\, R_1 \adj(\tilde{R_1}F\tilde{R_2})^\top R_2,\, \det(F)) \rangle
					\\
					&=W_\beta(\tilde{R_1}F\tilde{R_2})\,.
				\end{align*}
				With Lemma \ref{lem:B=D}, we can reduce the supremum over $R_1,R_2 \in \SO(3)$ to the following maximum
				\begin{align*}
					W_\beta(\diag(\nu))
					&= \sup\limits_{R_1, R_2 \in \SO(3)} \langle\beta, \operatorname{P} (R_1 \diag(\nu) R_2,\, R_1 \adj( \diag(\nu))^\top R_2,\, \det(\diag(\nu)) \rangle
					\\
					&=
					\max\limits_{S \in \Pd} \langle\beta, \m(S\nu)\rangle .
				\end{align*}
				Hence, $\nu \mapsto \max\limits_{S \in \Pd} \langle\beta, \m(S\nu)\rangle $ is  lower semicontinuous signed singular value polyconvex.
			\end{proof}

			\begin{proof}[Proof of Proposition \ref{prop:PolyConjugationPid}]
				We show the result for $d=3$, where $d=2$ can be treated analogously.
				For $S \in \Pi(3)$ and $\beta \in \R^{k_3}$, we define
				\begin{equation*}
					S^\top_{\text{P}}  \beta \coloneqq (S^\top(\beta_1, \beta_2, \beta_3)^\top, \, S^\top(\beta_4, \beta_5, \beta_6)^\top, \, \beta_7)\,
				\end{equation*}
				and note that $S^\top_{\text{P}} \colon \R^{k_3}\to \R^{k_3}$ is bijective since $S$ is bijective on $\R^{3}$.
				For arbitrary $\nu \in \R^3$ and $\tilde{\nu} =(\nu_2 \nu_3,\, \nu_1\nu_3,\, \nu_1\nu_2)^\top$, we obtain with \eqref{eq:Snu=Snu}:
				\begin{align*}
					\langle S^\top_{\text{P}} \beta , \m (\nu) \rangle 
					&= \langle S^\top(\beta_1, \beta_2, \beta_3)^\top,  \nu \rangle
					+
					\langle S^\top(\beta_4, \beta_5, \beta_6)^\top,  \tilde{\nu} \rangle
					+
					\beta_7 \nu_1\nu_2 \nu_3 
					\\
					&= \langle (\beta_1, \beta_2, \beta_3)^\top,  S\nu \rangle
					+
					\langle (\beta_4, \beta_5, \beta_6)^\top,  S\tilde{\nu} \rangle
					+
					\beta_7 \nu_1\nu_2 \nu_3 
					=\langle \beta , \m (S \nu) \rangle \,.
				\end{align*}
				With the $\Pi(3)$-invariance of $\Phi$, we calculate for arbitrary $S\in \Pi(3)$ and $\beta \in \R^{k_3}$ using that $S \colon \R^3\to \R^3$ is bijective:
				\begin{align*}
					\Phi^{\wedge}(S_{\text{P}}^\top \beta) = \max\limits_{\nu \in \R^3}\langle S_{\text{P}}^\top \beta , \m(\nu)\rangle - \Phi(\nu) = \max\limits_{\nu \in \R^3}\langle \beta , \m(S\nu)\rangle - \Phi(S \nu) = \Phi^{\wedge}(\beta) \,.
				\end{align*}
				We deduce further 
				\begin{align*}
					\Phi^{\wedge \vee}(\nu) &\coloneqq \sup\limits_{\beta\in \R^{k_d}} \langle\beta, \m(\nu)\rangle - \Phi^{\wedge}(\beta) = \sup\limits_{\beta\in \R^{k_d}} \max\limits_{S\in \Pi(3)}\langle S_{\text{P}}^\top\beta , \m(\nu)\rangle - \Phi^{\wedge}(S_{\text{P}}^\top \beta)
					\\
					&=
					\sup\limits_{\beta\in \R^{k_d}} \max\limits_{S\in \Pi(3)}\langle \beta , \m(S\nu)\rangle - \Phi^{\wedge}(\beta) \,.
				\end{align*}
				By Proposition \ref{prop:Lambda_lsc-svpc}, for every $\beta \in \R^{k_d}$, the map $\nu \mapsto \max\limits_{S\in \Pi(d)} \langle\beta, \m(S\nu)\rangle$ is lower semicontinuous signed singular value polyconvex. Subtracting the constant $\Phi^{\wedge}(\beta)$  does not affect this property.
				The lower semicontinuity as well as convexity are preserved for the supremum over $\beta \in \R^{k_d}$ and, thus, $\Phi^{\wedge \vee}$ is lower semicontinuous signed singular value polyconvex.
				
			\end{proof}

			With Proposition \ref{prop:polyconvexConjugation}, we can show that $\Phi^{\wedge \vee}$ provides the lower semicontinous signed singular value polyconvex envelope of $\Phi$.
			\begin{corollary}\label{cor:main}
				Let $ d \in \{2,3\}$ and $\Phi \colon \R^d \mapsto \R_\infty$ be isotropic. Then,
				\begin{align*}
					\Phi^{\wedge \vee}(\nu) = \sup\{\Psi(\nu) \mid \Psi \leq \Phi, \,\Psi &\textrm{ is  lower semicontinuous }\\ &\text{ signed singular value polyconvex} \}\,
				\end{align*}
				and, thus, $\Phi^{\wedge \vee}(\nu)\leq \SvPc\Phi \leq \Phi$ for
				\begin{equation*}
					\SvPc \Phi(\nu) \coloneqq \sup \left\{\Psi(\nu) \mid \Psi \leq \Phi,\,\Psi \textrm{ is signed singular value polyconvex} \right\}\,,
				\end{equation*}
				where we implicitly assume that the functions $\Psi$ are $\Pi(d)$-invariant since we have defined signed singular value polyconvexity only for $\Pi(d)$-invariant functions.
			\end{corollary}
			\begin{proof}We show Corollary \ref{cor:main} by contradiction.
				Assume there exists $\Psi$, which is  lower semicontinuous signed singular value polyconvex and $\nu_0 \in \R^d$ such that $\Psi \leq \Phi$ and $\Phi^{\wedge \vee}(\nu_0) < \Psi(\nu_0)$.
				The pointwise maximum $\max\{\Psi, \Phi^{\wedge \vee} \}$ is  lower semicontinuous signed singular value polyconvex and since $\Phi^{\wedge \vee} \leq \Phi$ (cf.~Proposition \ref{prop:polyconvexConjugation}(i)), we can assume without loss of generality that $\Phi^{\wedge \vee} \leq \Psi \leq \Phi$.
				With Proposition \ref{prop:polyconvexConjugation}(iv) and (iii) we deduce $\Psi = \Psi^{\wedge \vee}\leq \Phi^{\wedge \vee}$, which is a contradiction to $\Phi^{\wedge \vee}(\nu_0) < \Psi(\nu_0)$.
				
				The inequality $\Phi^{\wedge \vee}(\nu)\leq \SvPc \Phi \leq \Phi$ follows from the first part of Corollary~\ref{cor:main}.
			\end{proof}
			
			\begin{proof}[Proof of Theorem \ref{thm:thm-for-diagonal}]
				We have already observed that the desired result is equivalent to the statement:
				A $\Pi(d)$-invariant function $\Phi \colon \R^d \to \R_\infty$ is lower semicontinuous singed singular value polyconvex if and only if it is is lower semicontinuous polyconvex. The trivial implication is shown in Lemma \ref{lem:EquivPolyconvexityPhi}.
				For the reverse direction, let $\Phi$ be lower semicontinuous polyconvex, i.e.~there exists a lower semicontinuous convex function $g$ such that $g\circ \m = \Phi$ holds.
				We choose $h$ as in \eqref{eq:def:h} and thus, $g \leq h$. We deduce
				\begin{equation*}
					\Phi^{\wedge\vee}(\nu) \overset{\text{Lemma \ref{lem:Equivalence:Conj}}}{=} h^{**}(\m(\nu)) \overset{\text{Lemma \ref{lem:LF-Conj}(iii)}}{\geq} g^{**}(\m(\nu)) \overset{\text{Lemma \ref{lem:LF-Conj}(iv)}}{=} g(\m(\nu)) = \Phi(\nu) \,.
				\end{equation*}
				Proposition \ref{prop:polyconvexConjugation}(ii) gives the reverse inequality $\Phi^{\wedge\vee} \leq \Phi$ and, thus, $\Phi^{\wedge\vee} = \Phi$. 
				Proposition \ref{prop:polyconvexConjugation}(i) shows that $\Phi^{\wedge\vee}$ and, thus, $\Phi$ is lower semicontinuous signed singular value polyconvex.
			\end{proof}
						 
			\section{Some remarks on polyconvexity with respect to matrix invariants}\label{sec:Invariants}
			Isotropic functions $W$ with $W(F)= \infty$ for $\det(F)< 0$ can be formulated by means of singular values as in \eqref{eq:IsoSingularValues}.
			The singular values of $F$ are given by the zeros of the characteristic polynomial of $U=\sqrt{F^\top F}$.
			Since the coefficients of this polynomial are given by the matrix invariants $I(U)$ one can express such a function $W$ by means of the matrix invariants, i.e.~there exists $\psi \colon [0,\infty)^d \to \R_\infty$ such that $W(F)= \psi(I(U))$ for $\det(F)> 0$ and $W(F) = \infty$ else. For $d=3$, the invariants of $U$ are ${I(U) = (\operatorname{tr}(U),\, \operatorname{tr}(\adj(U)),\, \det(U))}$.
			
			For $d\in \{2,3\}$ and $\psi\colon [0,\infty)^d \to \R$ convex and non-decreasing in all but the last argument and $W$ given as above, it was shown in \cite{Ste03} using the polyconvexity criterion of Ball \cite[Theorem 5.2]{Bal77} that $W$ is polyconvex. While, this particular choice of $g$ weakens the criterion, it can simplify its applications and allows the direct consideration of functions naturally formulated in terms of matrix invariants. 
			We adapt the approach of \cite{Ste03} and use  Theorem~\ref{thm:thm-for-diagonal} instead of \cite[Theorem 5.2]{Bal77}. This leads to a new sufficient criterion for polyconvexity for functions which are given in terms of the elementary symmetric polynomials of the signed singular values. 
			This new criterion replaces the restriction that $\psi$ is non-decreasing by the symmetry assumption \eqref{eq:Sym:psi}. 
			
			We denote the elementary symmetric polynomials by $e=e(x)$, i.e.~
			\begin{align}\label{eq:ElemtarySymmetricPoly}
				\begin{aligned}
					&e\colon \R^2 \to \R^2, \ 
					x \mapsto (x_1 + x_2, \, x_1x_2)&& \textrm{ for } d=2\,,
					\\
					&e\colon \R^3 \to \R^3, \ 
					x \mapsto (x_1 + x_2 + x_3, \, x_2 x_3 +x_1 x_3 +x_1x_2, \, x_1 x_2 x_3) && \textrm{ for } d=3\,.
				\end{aligned}
			\end{align}
			
			\begin{lemma}[Injectivity of the elementary symmetric polynomials]\label{lem:Inj:ElemSymPol}
				Let $e \colon \R^d \to \R^d$ be defined by \eqref{eq:ElemtarySymmetricPoly}. Then, $e$ is injective up to permutations in $\S(d)$, i.e.~$e(x) = e(\tilde{x})$ if and only if $\tilde{x} = S x$ for $S \in \S(d)$.
			\end{lemma}
			\begin{proof}
				Let $x, \tilde{x} \in \R^d$ with $e(x) = e(\tilde{x}) \in \R^d$. 
				We identify the following polynomials 
				\begin{align*}
					\prod_{i=1}^d (\lambda -x_i) &= \lambda^d - \sum\limits_{i=1}^d \lambda^{d-i} (-1)^{i} e_i(x) = \lambda^d - \sum\limits_{i=1}^d \lambda^{d-i} (-1)^{i} e_i(\tilde{x}) =\prod_{i=1}^d (\lambda - \tilde{x}_i)
				\end{align*}
				and the fundamental theorem of algebra implies $\tilde{x} = S \tilde{x}$ for some $S \in \S(d) $.
			\end{proof}
			Due to the injectivity of $e$ given in Lemma \ref{lem:Inj:ElemSymPol}, for every $\S(d)$-invariant function $\Phi \colon \R^d \to \R_\infty$, i.e.~$\Phi(\nu ) = \Phi(S\nu)$ for all $\nu \in \R^d$ and $S \in S(d)$, there exists a function ${\psi \colon \R^d \to \R_\infty}$ such that
			\begin{equation}\label{eq:Phi=psi}
				\Phi(\nu) = \psi(e(\nu))\,.
			\end{equation}
			The function $\psi$ in \eqref{eq:Phi=psi} is uniquely defined on the image of $e$. The other way around, every function ${\psi \colon \R^d \to \R_\infty}$ defines an $\S(d)$-invariant function $\Phi \colon \R^d \to \R_\infty$ via \eqref{eq:Phi=psi}. However, in our application of isotropic functions, $\Phi$ is not only $\S(d)$- but also $\Pd$-invariant. Thus, $\psi$ has to satisfy the symmetry
			\begin{equation}\label{eq:Sym:psi}
				\psi( e(\nu)) = \psi(e(\diag(\varepsilon) \nu)) \quad \text{for all }\nu \in \R^d \text{ and } \epsilon \in \{-1,1\}^d \text{ with } \prod_{i=1}^d \epsilon_i = 1
			\end{equation}
			in order to generate a $\Pi(d)$-invariant function $\Phi$ via \eqref{eq:Phi=psi}.
			
			With this at hand, we can formulate a sufficient condition for polyconvexity in terms of $\psi$:
			\begin{proposition}\label{prop:Steigman:criterion}
				Let $d \in \{2,3\}$ and assume that $\psi \colon \R^d \to \R_\infty$ is convex, lower semicontinuous and satisfies \eqref{eq:Sym:psi}. Then, $\Phi\colon \R^d \to \R_\infty$ given by \eqref{eq:Phi=psi} is  lower semicontinuous signed singular value polyconvex.
			\end{proposition}
			\begin{proof}
				We prove the result for $d =3$; it follows in the same way for $d=2$.
				
				We choose $g(\nu_1,\nu_2,\nu_3, \mu_1, \mu_2, \mu_3, \delta) \coloneqq \psi (\nu_1 + \nu_2 +\nu_3, \mu_1 +\mu_2+\mu_3,\delta)$.
				It can be easily seen that $g \colon \R^{k_3} \to R_\infty$ is convex if and only if $\psi$ is convex. Moreover, $g$ is lower semicontinuous if and only if $\psi$ is lower semicontinuous. 
				Theorem \ref{thm:Phi=g} implies that $\Phi$ is  lower semicontinuous signed singular value polyconvex.
			\end{proof}

			Proposition \ref{prop:Steigman:criterion} gives only a sufficient but not a necessary condition for the  lower semicontinuous signed singular value polyconvexity of $\Phi$. The proof uses a specific choice of the representative $g$ based on $\psi$ but does not exclude other choices for $g$.
			
			We note that for $F\in \R^{d \times d}$ with $\det(F) >0$, we can choose the signed singular values positive, i.e.~$\nu \in (0,\infty)^d$ and have $e(\nu) =  I(U) = (\operatorname{tr}(U),\, \operatorname{tr}(\adj(U)),\, \det(U))$, where $U$ is the symmetric positive definite right stretch tensor of $F$, i.e.~$U^2 = F^\top F$.
			As a consequence of Proposition \ref{prop:Steigman:criterion}, we can replace the non-decreasing property in the polyconvexity criterion of \cite{Ste03} by the symmetry assumption \eqref{eq:Sym:psi}:
			\begin{corollary}
				Let $d \in \{2,3\}$ and $\psi \colon \R^{d} \to \R_\infty$ be a convex and lower semicontinuous function satisfying \eqref{eq:Sym:psi} and $\psi(\alpha_1, \dots, \alpha_d) = \infty$ for $\alpha_d \leq 0$. Then, the function $W \colon \R^{d \times d} \to \R_\infty$ be given by
				\begin{equation*}
					W(F) = \begin{cases}
						\psi(I(U)) & \text{ if }\det(F) > 0\,,
						\\
						\infty & \text{ if } \det(F) \leq 0\,,
					\end{cases}
				\end{equation*}
				where $U$ is the symmetric positive definite right stretch tensor of $F$, i.e.~$U^2 = F^\top F$, and $I(U)$ denotes its invariants, ${I(U) = (\operatorname{tr}(U),\, \operatorname{tr}(\adj(U)),\, \det(U))}$.
				Then, $W$ is isotropic and lower semicontinuous polyconvex.
			\end{corollary}

			\section{Conclusion}
			
			We identified isotropic functions ($W$) on $\R^{d \times d}$ by $\Pi(d)$-invariant functions ($\Phi$) on $\R^d$. For functions on $\R^d$, we use a notion of polyconvexity, which arises naturally when replacing the minors of matrices by minors of vectors. We introduced the notion of singed singular value polyconvexity for $\Pi(d)$-invariant functions via the polyconvexity of the corresponding isotropic function. Using the polyconvex conjugation, an extension of the Fenchel--Legendre transformation, we showed for $\Pi(d)$-invariant functions that they are polyconvex if and only if they are signed singular value polyconvex. This leads to our main result:
			isotropic functions are polyconvex if and only if their restriction to the set of diagonal matrices is polyconvex. 
			
			It provides  the following dimension reduction: Instead of seeking a convex representatives  $G \colon \R^{k_d} \to \R_\infty$, it suffices to seek for a convex representatives $g \colon \R^{k_d} \to \R_\infty$, with $k_2 = 3$ instead of $K_2= 5$ and $k_3= 7$ instead of $K_3 = 19$.
						
			Furthermore, we derived a new criterion for the polyconvexity of isotropic functions which are given in terms to the matrix invariants of the right stretch tensor $\sqrt{F^\top F}$ instead of the deformation gradient $F$ or, more generally, the elementary symmetric polynomials of the singular values of $F$.
			
			\section*{Acknowledgement}
			D.~Wiedemann would like to thank the Marianne-Plehn-Programm for funding.
			Furthermore, both authors acknowledge the Deutsche Forschungsgemeinschaft (DFG) for funding within the Priority Program 2256 (“Variational Methods for Predicting Complex Phenomena in Engineering Structures and Materials”), Project ID 441154176, reference ID PE1464/7-1.
			
			We would like to thank A.~Mielke and D.~Balzani for fruitful discussions and T.~Neumeier und D.~Peterseim for helpful remarks on the manuscript.
			
			\subsection*{Statements and Declarations}
			The authors have no financial or non-financial interests that are directly or indirectly
			related to the work submitted for publication.
			Data sharing not applicable to this article.

						\bibliographystyle{alpha}
			\bibliography{IsotropicPolyconvexity}

		\end{document}